\documentclass[11pt]{amsart} 

\usepackage[a4paper, margin=2.5cm]{geometry}

\usepackage[T1]{fontenc}    
\usepackage{lmodern}        

\usepackage{amsmath, amssymb, amsthm, amsrefs}
\usepackage{mathtools, mathrsfs, bm}
\usepackage{graphicx}
\usepackage{xcolor}

\definecolor{darkblue}{rgb}{0.0, 0.0, 0.55}
\usepackage{hyperref}
\hypersetup{
      colorlinks=true,
      linkcolor=darkblue,
      citecolor=darkblue,
      urlcolor=darkblue,
      linktoc=page,
}

\allowdisplaybreaks[4]
\numberwithin{equation}{section}

\def\le{\leqslant}
\def\ge{\geqslant}
\def \ds{\displaystyle}
\renewcommand{\Re}{\operatorname{Re}}
\renewcommand{\Im}{\operatorname{Im}}

\def \R{\mathbb{R}}
\def \C{\mathbb{C}}

\def \N{\mathbb{N}}

\DeclarePairedDelimiter{\norm}{\lVert}{\rVert}
\DeclarePairedDelimiter{\ev}{\langle}{\rangle}

\DeclarePairedDelimiterX\set[1]{\{}{\}}{%
      #1
}

\theoremstyle{plain}
\newtheorem{theorem}{Theorem}[section]

\newtheorem{lemma}[theorem]{Lemma}

\newtheorem{proposition}[theorem]{Proposition}
\theoremstyle{remark}
\newtheorem{remark}[theorem]{Remark}

\begin{document}
\title[Large-data $L^2$-decay for attractive-dissipative NLS]{Large-data $L^2$-decay for attractive-dissipative nonlinear Schr\"odinger equations without the strong dissipative condition}
\author[N. Kita]{Naoyasu Kita}
\address[]{Faculty of Advanced Science and Technology, Kumamoto University, Kumamoto, 860-8555, Japan}
\email{nkita@kumamoto-u.ac.jp}
\author[H. Miyazaki]{Hayato Miyazaki}
\address[]{Teacher Training Courses, Faculty of Education, Kagawa University, Takamatsu, Kagawa 760-8522, Japan}
\email{miyazaki.hayato@kagawa-u.ac.jp}
\author[T. Sato]{Takuya Sato}
\address[]{
Graduate School of Science and Engineering, Ehime University,
2-5 Bunkyo-cho, Matsuyama, Ehime, 790-8577, Japan
}
\email{
sato.takuya.gd@ehime-u.ac.jp
}
\keywords{nonlinear Schr\"odinger equations, $L^{2}$-decay of solutions, arbitrary data}
\subjclass[2020]{35Q55, 35B40}
\date{}

\begin{abstract}
We prove a large-data $L^2$-decay estimate for nonlinear dissipative Schr\"odinger equations with attractive-dissipative power nonlinearity.
The main difficulty is the lack of sign definiteness of the standard energy when $\Re\lambda<0$, which prevents the usual energy argument from directly yielding a uniform gradient bound.

We introduce an augmented energy, obtained by adding a suitable multiple of the decreasing $L^2$-norm to the standard energy.
This produces an additional dissipative term and gives a direct uniform-in-time $H^1$ bound without the iteration argument used in previous works.
Consequently, for arbitrary initial data in the weighted energy space
$\Sigma = H^1\cap\mathcal{F}H^1$,
we obtain the decay rate previously known under the strong dissipative condition throughout the sharp decay range $1<p\le 1+2/d$.
This removes the remaining restriction $p\le 1+4/(3d)$ in the attractive-dissipative case.
\end{abstract}

\maketitle


\section{Introduction} 

We consider the nonlinear Schr\"odinger equation
\begin{align}
\begin{cases}
      \ds i \partial_t u  +  \frac{1}{2} \Delta u  = \lambda |u|^{p-1} u,
      \quad (t,x) \in (0,\infty)\times\R^d,\\
      u(0,x)=u_0(x), \quad x \in \R^d,
\end{cases}
      \label{nls} \tag{NLS}
\end{align}
where $d\in\N$, $p>1$, and $\lambda\in\C$ satisfies
\[
      \Im\lambda<0.
\]
Then the $L^2$-norm is dissipative:
\begin{align*}
      \norm{u(t)}_2^2
      =
      \norm{u_0}_2^2
      +2\Im\lambda\int_0^t \norm{u(s)}_{p+1}^{p+1}\,ds.
\end{align*}
The purpose of this paper is to establish the sharp large-data decay rate of
$\norm{u(t)}_2$ in the attractive-dissipative case
\[
      \Re\lambda<0,\qquad \Im\lambda<0.
\]

For general solutions, the natural range for the $L^2$-decay problem is
\[
      1<p\le 1+\frac{2}{d}.
\]
Indeed, when $p>1+2/d$, the nonlinear dissipation is too weak in general, and
the $L^2$-norm need not decay to zero; see, for instance, \cite{S21}.
Under the strong dissipative condition
\begin{align}
      (p-1)|\lambda|\le (p+1)|\Im\lambda|,
      \qquad \Im\lambda<0,
      \label{sdc}
\end{align}
Hayashi, Li, and Naumkin \cite{HLN16} proved that arbitrary data
$u_0\in\Sigma:=H^1(\R^d)\cap\mathcal{F}H^1(\R^d)$ give the decay rate
\begin{align}
      \norm{u(t)}_{2} \lesssim
      \begin{cases}
            (\log t)^{- \frac{2}{(d+2)(p-1)}} & \text{if } p=1+2/d,\\
            t^{- \frac{2}{(d+2)(p-1)} \left(1-\frac{d(p-1)}{2}\right)}
            & \text{if } 1<p<1+2/d.
      \end{cases}
      \label{hln:rate}
\end{align}

The difficulty in removing \eqref{sdc} depends on the sign of $\Re\lambda$.
When $\Re\lambda\ge0$, the standard energy gives a uniform bound for
$\norm{\nabla u(t)}_2$, and the decay rate \eqref{hln:rate} was obtained in
\cite{GKS24}.
When $\Re\lambda<0$, however, the standard energy
\[
      E(u)=\frac12\norm{\nabla u}_2^2
      +\frac{\Re\lambda}{p+1}\norm{u}_{p+1}^{p+1}
\]
is not sign definite.
Thus the standard energy does not directly control $\norm{\nabla u(t)}_2$.
Recent works \cite{GKS25} and \cite{KMS25} overcame this difficulty only in restricted
ranges of $p$; in particular, \cite{KMS25} reached
\[
      p\le 1+\frac{4}{3d}.
\]
Thus the remaining range
\[
      1+\frac{4}{3d}<p\le 1+\frac{2}{d}
\]
was left open in the attractive-dissipative case.

Dissipative nonlinear Schr\"odinger equations have also been studied in connection with refined asymptotic behavior and pointwise decay; see
\cite{CH20,CHN21,CN18,CN17,Hoshino25,Hoshino20,Hoshino19-2,Hoshino19-1}.
These works often impose smoothness, spatial decay, or additional structural conditions on the initial data.
In contrast, we prove an $L^2$-decay estimate for arbitrary data in $\Sigma$.

Another related direction concerns nonlinear Schr\"odinger equations with combined focusing and dissipative nonlinearities, where finite-time blow-up and global existence have been studied; see
\cite{ACS15,AS10,Darwich14}.
The present paper treats a single-power attractive-dissipative nonlinearity and focuses on the quantitative $L^2$-decay of global solutions in the range
$1<p\le1+2/d$.

The main observation of this paper is that the mass dissipation itself can be
used to restore a coercive energy control.
For a suitable constant $\gamma>0$, we introduce the augmented energy
\[
      E_{\mathrm{aug}}(u)
      =
      E(u)+\gamma\norm{u}_2^2.
\]
The time derivative of the added term produces the negative contribution
\[
      -2\gamma|\Im\lambda|\norm{u}_{p+1}^{p+1},
\]
which absorbs the noncoercive part of the standard energy estimate.
This bypasses the bootstrap used in earlier approaches, where intermediate
$L^2$-decay estimates are substituted into energy estimates, and yields a direct
uniform-in-time bound for $\norm{\nabla u(t)}_2$.
Consequently, we obtain \eqref{hln:rate} throughout the whole range
$1<p\le1+2/d$.

We use the standard integral formulation of solutions.
Let $I\subset\R$ be an interval and fix $t_0\in I$.
A function $u\in C(I;L^2(\R^d))$ is called a solution to \eqref{nls} on $I$ if
\[
      u(t)=U(t-t_0)u(t_0)
      -i\int_{t_0}^t U(t-s)\lambda |u(s)|^{p-1}u(s)\,ds
\]
in $L^2(\R^d)$ for all $t,t_0\in I$, where $U(t)=e^{it\Delta/2}$.

We now state the main result.

\begin{theorem} \label{thm:1}
Let $1<p \le 1+2/d$.
Assume $\Re \lambda <0$ and $\Im \lambda < 0$.
Take $u_{0} \in \Sigma$ with $\norm{u_{0}}_{\Sigma} \neq 0$.
Then \eqref{nls} admits a unique global solution $u \in C([0, \infty) ; \Sigma)$.
Moreover, if $p = 1 + 2/d$, then it holds that
\[
      \norm{ u(t)}_{2} \lesssim (\log (1+t))^{- \frac{2}{(d+2)(p-1)}}
\]
for any $t \ge 1$.
When $1<p < 1+ 2/d$, the estimate
\[
      \norm{ u(t)}_{2} \lesssim (1+t)^{- \frac{2}{(d+2)(p-1)} \left(1- \frac{d(p-1)}{2} \right)}
\]
is valid for any $t \ge 0$.
\end{theorem}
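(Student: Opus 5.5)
The proof has three stages: a uniform $H^1$ bound from the augmented energy, a bound on the weighted norm $\norm{J(t)u}_2$ with $J(t)=x+it\nabla$, and then a differential inequality for the mass.

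\textbf{Uniform $H^1$ bound.} Local well-posedness in $\Sigma$ is standard, since $p\le 1+2/d$ is $H^1$-subcritical. The real issue is an a priori bound on $\norm{\nabla u(t)}_2$. A formal computation, justified by regularization, gives
\[
\frac{d}{dt}E(u)=\Im\lambda\,\Re\!\int \overline{\nabla u}\cdot\nabla(|u|^{p-1}u)\,dx+\text{(a term from $\Re\lambda$ times $\Im\lambda$)}.
\]
Expanding $\nabla(|u|^{p-1}u)$, the first piece is $\le \Im\lambda\int |u|^{p-1}|\nabla u|^2$ up to a $(p-1)$-dependent factor that is dissipative only under \eqref{sdc}. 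So without \eqref{sdc} I only get a bound of the form
\[
\frac{d}{dt}E \le C|\lambda|\int|u|^{p-1}|\nabla u|^2 - c|\Im\lambda|\int|u|^{p-1}|\nabla u|^2 + C|\lambda|^2\norm{u}_{2p}^{2p}.
\]
I would instead redo the computation by writing $u=|u|e^{i\theta}$ locally, or use the identity $\Re(\bar{\nabla u}\cdot\nabla(|u|^{p-1}u))=|u|^{p-1}|\nabla u|^2+\frac{p-1}{4}|u|^{p-3}|\nabla|u|^2|^2$. This shows that $\frac{d}{dt}E$ equals a nonpositive multiple of $\int|u|^{p-1}|\nabla u|^2$ plus an indefinite term controlled by $|\Re\lambda|\,|\Im\lambda|\norm{u}_{2p}^{2p}$. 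Meanwhile $\frac{d}{dt}\gamma\norm{u}_2^2=-2\gamma|\Im\lambda|\norm{u}_{p+1}^{p+1}$.

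Two steps then close the bound. First, Gagliardo--Nirenberg with $p\le1+2/d$ and the decreasing mass gives
\[
\norm{u}_{p+1}^{p+1}\le C\norm{u_0}_2^{p+1-d(p-1)/2}\norm{\nabla u}_2^{d(p-1)/2}, \qquad d(p-1)/2\le1<2.
\]
Hence $E_{\mathrm{aug}}\ge \frac14\norm{\nabla u}_2^2-C$, so $E_{\mathrm{aug}}$ is coercive. Second, and this is the key step, I expect the main obstacle to be absorbing the indefinite term $\int|u|^{2p}$ or the bad $(p-1)$-portion of $\int |u|^{p-3}(\Re\bar u\nabla u)^2$. The tool I would try first is a pointwise Young inequality
\[
|u|^{p-1}|\nabla u|\,|\nabla|u||\le \epsilon|u|^{p-1}|\nabla u|^2+C_\epsilon|u|^{p-1}|\nabla |u||^2,
\]
combined with the dissipation gained from the mass term. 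Concretely, I would choose $\gamma$ large to aim for
\[
\frac{d}{dt}E_{\mathrm{aug}}\le -c\norm{u}_{p+1}^{p+1}+C\norm{u}_{p+1}^{p+1}\le0 \quad\text{whenever }\norm{\nabla u}_2\text{ is large}.
\]
That yields $\sup_t\norm{\nabla u(t)}_2<\infty$ and hence global existence in $H^1$.

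\textbf{Weighted estimate.} Next I would control $\norm{J(t)u}_2$. This is the pseudo-conformal/lens-transform computation used in \cite{HLN16,GKS24}: the $J$-energy $\norm{Ju}_2^2+\frac{2\Re\lambda t^2}{p+1}\cdot(\dots)\norm{u}_{p+1}^{p+1}$ grows at most like $t^{2(1-\delta)}$ in the way those papers need, and the uniform $H^1$ bound from the first stage feeds into it. I would cite those papers for this step.

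\textbf{Decay rate.} Finally, the interpolation inequality
\[
\norm{u}_2\le C\norm{u}_{p+1}^{\alpha}\bigl(\norm{Ju}_2/t\bigr)^{\beta}
\]
(or its $\nabla u$ variant) converts the mass identity into the ODE
\[
\frac{d}{dt}\norm{u}_2^2\le -c\,t^{-\kappa}\norm{u}_2^{2+\mu}.
\]
Integrating this gives exactly \eqref{hln:rate}, logarithmic when $p=1+2/d$, as in \cite{HLN16,GKS24}.
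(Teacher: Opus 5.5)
Your architecture matches the paper's (augmented energy, then a weighted bound, then a mass ODE), but the step that carries the whole argument is left unproved. Your own identity already shows that the $(p-1)$ cross term is dissipative, so the pointwise Young inequality you propose targets a harmless term. The only obstruction is $2|\Re\lambda||\Im\lambda|\,\|u\|_{2p}^{2p}$. Your target bound $-c\|u\|_{p+1}^{p+1}+C\|u\|_{p+1}^{p+1}$ simply presupposes the missing estimate
\[
\|u\|_{2p}^{2p}\le \varepsilon\int|u|^{p-1}|\nabla u|^2\,dx+C_\varepsilon\|u_0\|_2^{\frac{4(p-1)}{4-d(p-1)}}\|u\|_{p+1}^{p+1}.
\]
Note also that large $\|\nabla u\|_2$ by itself gives no sign information. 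The sign must come from $\gamma$ dominating a remainder that is linear in $\|u\|_{p+1}^{p+1}$.

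This estimate does hold. Set $f=|u|^{(p+1)/2}$, so that $|\nabla f|^2\le\frac{(p+1)^2}{4}|u|^{p-1}|\nabla u|^2$ and $\|f\|_2^2=\|u\|_{p+1}^{p+1}$. H\"older with exponents $\frac{2}{p-1},\frac{2}{3-p}$, Gagliardo--Nirenberg, and $\|u(t)\|_2\le\|u_0\|_2$ give
\[
\|u\|_{2p}^{2p}\le\|u\|_2^{p-1}\|f\|_{4/(3-p)}^{2}\lesssim\|u_0\|_2^{p-1}\|f\|_2^{2(1-\theta)}\|\nabla f\|_2^{2\theta},\qquad\theta=\frac{d(p-1)}{4}<1,
\]
and Young's inequality yields the display above. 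Then any $\gamma\ge C(d,p,\Re\lambda)\|u_0\|_2^{4(p-1)/(4-d(p-1))}$ makes $E_{\mathrm{aug}}(u(t))$ nonincreasing for all $t$ at once. That is actually shorter than the paper's route. The paper applies Gagliardo--Nirenberg to $f$ without inserting the mass, so it gets the superlinear remainder $C_E|\Im\lambda|(\|u\|_{p+1}^{p+1})^{p_1}$ with $p_1=\frac{4p-d(p-1)}{2(p+1)-d(p-1)}>1$. It therefore needs a small-data continuity argument plus the scaling $u_\mu(t,x)=\mu^{2/(p-1)}u(\mu^2t,\mu x)$ with $\gamma=\mu^{-2}$. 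As written, your Stage 1 contains neither mechanism.

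Stages 2 and 3 also need repair. A pseudo-conformal $J$-energy is the wrong tool here. Its term $\Re\lambda\,t^2\|u\|_{p+1}^{p+1}$ has the bad sign when $\Re\lambda<0$, and the works you cite assume \eqref{sdc} or $\Re\lambda\ge0$. It is also unnecessary: once $\sup_t\|\nabla u(t)\|_2\le M$, the virial identity gives $\frac{d}{dt}\|xu(t)\|_2\le M$, hence $\|xu(t)\|_2\lesssim1+t$, which is all the paper uses.

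Your interpolation $\|u\|_2\lesssim\|u\|_{p+1}^{\alpha}(\|Ju\|_2/t)^{\beta}$ is false. Since $\|J(t)u\|_2/t=\|\nabla(\mathcal{M}(-t)u)\|_2$ and the other two norms do not see the unimodular factor $\mathcal{M}(-t)$, it reduces to $\|v\|_2\lesssim\|v\|_{p+1}^{\alpha}\|\nabla v\|_2^{\beta}$. That fails for every $\beta>0$, as you can see by rescaling a fixed bump in amplitude and width: an $L^2$ norm cannot be controlled by $L^{p+1}$ and gradient norms without spatial localization. The correct step combines three ingredients: H\"older between $L^q$ ($q<2$) and $L^{p+1}$, the weighted bound $\|u\|_q\lesssim\|u\|_2^{1-d(\frac1q-\frac12)}\|xu\|_2^{d(\frac1q-\frac12)}$, and $\|xu(t)\|_2\lesssim1+t$. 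Together they give $\frac{d}{dt}\|u\|_2^2\le-C(1+t)^{-d(p-1)/2}\|u\|_2^{p+1+d(p-1)/2}$, and integrating this yields \eqref{hln:rate}.
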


\begin{remark}
The argument also applies to the repulsive-dissipative case $\Re\lambda\ge0$, $\Im\lambda<0$.
In that case, the standard energy already gives a uniform gradient bound, and
the augmented energy argument recovers the same decay rate without imposing
\eqref{sdc}.
\end{remark}

\subsection{Strategy of the proof}
The proof is based on two elementary estimates and one augmented energy argument.
First, the $L^2$-dissipative identity and H\"older's inequality imply
\begin{align}
      \frac{d}{dt}\norm{u(t)}_2^2
      \le
      -C\norm{u(t)}_2^\beta\norm{u(t)}_q^{-(\beta-p-1)}
      \label{stra:1}
\end{align}
for some $\beta>p+1$, where $q=1$ if $d=1$ and
$q=2(d+1)/(d+2)$ if $d\ge2$.
Second, the weighted interpolation inequality
\[
      \norm{u}_q
      \lesssim
      \norm{u}_2^{1-d\alpha}
      \norm{xu}_2^{d\alpha},
      \qquad
      \alpha=\frac1q-\frac12,
\]
reduces the decay estimate to the growth control of $\norm{xu(t)}_2$.

The virial-type estimate
\[
      \norm{xu(t)}_2
      \lesssim
      \norm{xu_0}_2+\int_0^t\norm{\nabla u(s)}_2\,ds
\]
shows that it is enough to obtain a uniform-in-time bound for
$\norm{\nabla u(t)}_2$.
This is precisely where the attractive-dissipative case is difficult, because
the standard energy is not sign definite.

In previous approaches to the attractive-dissipative case, one had to improve
the growth bound for $\norm{\nabla u(t)}_2$ through an iteration argument.
Here the iteration means that a preliminary growth bound is converted through
\eqref{stra:1} into an $L^2$-decay estimate, which is then substituted into the
energy estimate to improve the growth bound.
The point of the present paper is that this iteration can be avoided.
We add a suitable multiple of the decreasing $L^2$-norm to the standard energy
and prove in Proposition \ref{prop-eneB} that
\[
      E_{\mathrm{aug}}(u(t))
      =
      E(u(t))+\gamma\norm{u(t)}_2^2
\]
is nonincreasing for an appropriate choice of $\gamma>0$ depending on the size
of the initial data.
This directly gives a uniform gradient bound, hence
\[
      \norm{xu(t)}_2\lesssim 1+t,
\]
and the decay rate \eqref{hln:rate} follows from \eqref{stra:1}.

\vskip2mm
\subsection*{Notations}
For any $p \ge 1$, $L^p = L^{p}(\R^d)$ denotes the usual Lebesgue space on $\R^d$ equipped
with the norm $\norm{\, \cdot\, }_{p} \coloneqq \norm{\, \cdot\,}_{L^{p}}$.
Set $\ev{a}=(1+|a|^2)^{1/2}$ for $a \in \C$ or $\R^d$.
$\mathcal{F}[u] = \widehat{u}$ is the usual Fourier transform of a function $u$ on $\R^d$.
For $s \ge 0$,
the standard Sobolev space on $\R^{d}$ is defined
by $H^{s} = H^{s}(\R^{d}) \coloneqq \{ u \in L^{2}(\mathbb{R}^d) \mid \norm{u}_{H^{s}} \coloneqq \norm{\ev{\nabla}^{s} u}_{L^2} < \infty \}$.
We denote the weighted Sobolev space by
$\mathcal{F}H^{s} = \mathcal{F}H^{s}(\R^{d}) \coloneqq \{ u \in L^{2}(\mathbb{R}^d)
\mid \norm{u}_{\mathcal{F}H^{s}} \coloneqq \norm{\ev{x}^{s}u}_{L^2} < \infty \}$.
Set $\Sigma = H^{1} \cap \mathcal{F} H^{1}$.
Let $U(t)$ be the Schr\"odinger group $e^{it \Delta/2}$.
$A \lesssim B$ denotes $A \le CB$ for some constant $C>0$, which may change from line to line.

\bigskip
The paper is organized as follows.
Section \ref{sec:2} collects the preliminary estimates needed for the decay argument.
Section \ref{sec:3} introduces the augmented energy method, derives the uniform gradient bound without iteration, and proves Theorem \ref{thm:1}.

\section{Preliminary} \label{sec:2}

In this section, we introduce several preliminary results that will be used in the proof of the main theorem.
Let us first recall Strichartz estimates. A pair $(p,r)$ is said to be admissible if
\[
      2\le q,r \le \infty,\quad
      \frac{2}{q} = d\left(\frac12-\frac{1}{r} \right), \quad (d,q,r)\neq (2,2,\infty).
\]

\begin{lemma}[Strichartz estimates, e.g., \cite{S77}, \cite{GV85}, \cite{Y87}, and \cite{KT98}] 
Let $(q,r)$ and $(\widetilde{q}, \widetilde{r})$ be admissible pairs.
For any interval $I \ni 0$,
\begin{align*}
      \norm{U(t) f}_{L^{q}(\R;L^{r}(\R^d))} \le{}& C_{0} \norm{ f}_{L^2(\R^d)}, \\
      \norm*{\int_0^t U(t-s) F(s)\, ds}_{L^{q}(I;L^{r}(\R^d))}
      \le{}& C_{0} \norm{ F}_{L^{\widetilde{q}'}(I;L^{\widetilde{r}'}(\R^d))},
\end{align*}
where $C_{0} >0$ is a certain constant independent of $I$, and $\rho'$ is the dual exponent of $\rho \ge 1$.
\end{lemma}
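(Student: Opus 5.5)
The statement just above is the standard Strichartz estimate, which the paper quotes from the literature rather than proves. My plan follows the classical $TT^*$ route of Ginibre--Velo and Yajima; the endpoint $q=2$, $d\ge3$, is the part that needs Keel--Tao.

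\emph{Dispersive estimate.} The starting point is the explicit kernel $U(t)=e^{it\Delta/2}$, namely $(2\pi i t)^{-d/2}e^{i|x-y|^2/(2t)}$. It gives
\[
\norm{U(t)f}_\infty\lesssim |t|^{-d/2}\norm{f}_1 .
\]
Interpolating this with $L^2$ unitarity (Riesz--Thorin) yields
\[
\norm{U(t)f}_{r}\lesssim |t|^{-d(1/2-1/r)}\norm{f}_{r'},\qquad 2\le r\le\infty .
\]

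\emph{Non-endpoint homogeneous estimate.} By duality, the first inequality is equivalent to boundedness of $T^*\colon L^{q'}_tL^{r'}_x\to L^2$, where $T^*F=\int U(-s)F(s)\,ds$. This in turn is equivalent to
\[
TT^*F=\int U(t-s)F(s)\,ds \quad\text{mapping } L^{q'}L^{r'}\to L^qL^r .
\]
Applying the dispersive bound inside the time integral, the time variable sees a convolution with $|t-s|^{-d(1/2-1/r)}=|t-s|^{-2/q}$. When $q>2$ we have $0<2/q<1$, so Hardy--Littlewood--Sobolev in one dimension gives exactly the $L^{q'}\to L^q$ bound. This settles the first estimate for non-endpoint pairs.

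\emph{Inhomogeneous estimate.} For equal pairs, the retarded version $\int_0^t$ follows from the same HLS argument with the kernel truncated to $s<t$. For two different admissible pairs, I will factor the untruncated operator as $T\circ\widetilde T^*$ and use the homogeneous bounds for each factor. I will then pass to the retarded integral with the Christ--Kiselev lemma, which applies whenever $\widetilde q'<q$; this holds except when both pairs are endpoints. Restricting to an interval $I$ is harmless: extend $F$ by zero outside $I$. The constant $C_0$ is then independent of $I$.

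\emph{Main obstacle: the endpoint $q=2$, $d\ge3$.} Here HLS fails, since the kernel $|t|^{-1}$ is not integrable. I would follow Keel--Tao. First split the bilinear form $\iint\langle U(-s)F(s),U(-t)G(t)\rangle\,ds\,dt$ dyadically in $|t-s|\sim2^j$. Then prove off-diagonal estimates for each dyadic piece over a two-parameter range of exponents, by interpolating the dispersive estimate with the energy estimate. Finally, resum the pieces by bilinear real interpolation, which yields the needed Lorentz-space gain. The endpoint is excluded when $d=2$, consistent with the condition $(d,q,r)\neq(2,2,\infty)$ in the definition of admissible pairs.
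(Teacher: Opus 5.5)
The paper gives no proof of this lemma; it cites Strichartz, Ginibre--Velo, Yajima and Keel--Tao. Your outline is correct and follows the standard route of those references: dispersive bound, $TT^*$ with one-dimensional Hardy--Littlewood--Sobolev, $T\widetilde T^*$ plus Christ--Kiselev for mixed pairs, and Keel--Tao for $q=2$. Two small points need tidying: the pair $(q,r)=(\infty,2)$ lies outside your Hardy--Littlewood--Sobolev range but follows from unitarity (and Minkowski for the inhomogeneous bound), and the double-endpoint retarded estimate needs the Keel--Tao dyadic argument run on the truncated form $\iint_{s<t}$ (as Keel--Tao actually do), which you should state explicitly since Christ--Kiselev is unavailable there.
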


Let us define $J(t) = x+it \nabla$ for any $t \ge 0$, which is
related to the Galilean symmetry of equation \eqref{nls}.
The operator satisfies
\[
      J(t) = \mathcal{M}(t) it \nabla \mathcal{M}(-t) = U(t) x U(-t),
       \quad \mathcal{M}(t) \coloneqq e^{i|x|^2/2t}.
\]

We state the existence of a unique global $\Sigma$-solution to \eqref{nls} and its $L^2$-dissipative property. The proof is based on the standard arguments involving Strichartz estimates and the sign condition $\Im \lambda <0$.

\begin{lemma}[Global well-posedness and $L^2$-dissipation] \label{lem:22}
Let $1<p < 1+4/d$ and $\Im \lambda < 0$. Suppose $ u_{0} \in \Sigma$. Then \eqref{nls} admits a unique global solution $u \in C([0, \infty); \Sigma)$ satisfying
\begin{align}
      \norm{u(t)}_{2} \le{}& \norm{u_{0}}_{2} \label{uni:1}
\end{align}
for any $t \ge 0$.
\end{lemma}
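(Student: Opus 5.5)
We will prove Lemma \ref{lem:22} in three steps: local well-posedness in $\Sigma$, the mass identity, and an a priori $\Sigma$-bound on bounded time intervals that lets us extend the solution globally.

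\textbf{Local theory.} We run a contraction argument for the Duhamel formula in a Strichartz space. Take the admissible pair $(q,r)$ with $r=p+1$, so $q=4(p+1)/(d(p-1))$; the restriction $p<1+4/d$ makes this pair $L^2$-subcritical. Define
\[
X_T=\set{u : u,\ \nabla u,\ J(t)u \in L^\infty(0,T;L^2)\cap L^q(0,T;L^r)}.
\]
The nonlinearity $F(u)=\lambda|u|^{p-1}u$ is gauge invariant. Since $J(t)=\mathcal M(t)it\nabla\mathcal M(-t)$, we get the pointwise bound $|J(t)F(u)|\lesssim |u|^{p-1}|J(t)u|$, and $|\nabla F(u)|\lesssim |u|^{p-1}|\nabla u|$. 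Also $J(t)$ commutes with $i\partial_t+\frac12\Delta$. Applying the Strichartz estimates of the previous lemma with the dual pair $(q',r')$ and H\"older in time, we obtain
\[
\norm{\Phi(u)}_{X_T}\le C_0\norm{u_0}_\Sigma + C\,T^{1-\frac{2(p-1)}{q}\cdot\frac12\cdot\ldots}\norm{u}_{X_T}^p,
\]
where the power of $T$ is positive exactly because $p<1+4/d$. A similar difference estimate gives a contraction for small $T$, with $T$ depending only on $\norm{u_0}_\Sigma$. Since $J(0)=x$, control of $J(t)u$ and $\nabla u$ gives control of $xu(t)$, and continuity in $\Sigma$ follows. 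Uniqueness in $C(I;L^2)$ comes from the $L^2$-level Strichartz difference estimate.

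\textbf{Mass dissipation.} For smooth, decaying solutions we multiply the equation by $\bar u$, take imaginary parts and integrate:
\[
\frac{d}{dt}\norm{u}_2^2=2\Im\lambda\norm{u}_{p+1}^{p+1}\le0 .
\]
For $\Sigma$-solutions we justify this by regularization: approximate $u_0$ in $\Sigma$ by Schwartz data, or use the standard computation in the Duhamel formulation. Continuous dependence then passes the identity to the limit, and \eqref{uni:1} follows.

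\textbf{Globalization (main obstacle).} Since the local time depends only on $\norm{u_0}_\Sigma$, it suffices to show that $\norm{u(t)}_\Sigma$ cannot blow up in finite time. The $L^2$ part is handled by \eqref{uni:1}. For $\nabla u$ we do \emph{not} use the energy, since $\Re\lambda$ has no sign. Instead we iterate the local Strichartz estimate on short intervals whose length is determined only by the mass, which is possible because the problem is $L^2$-subcritical.

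First, on any interval $[t_1,t_1+\tau]$ with $\tau$ depending only on $\norm{u_0}_2$, we get $\norm{u}_{L^q L^r}\lesssim\norm{u_0}_2$. Next, the linear estimate
\[
\norm{\nabla u}_{L^\infty L^2\cap L^qL^r}\le C_0\norm{\nabla u(t_1)}_2+C\tau^{\theta}\norm{u}_{L^qL^r}^{p-1}\norm{\nabla u}_{L^qL^r}
\]
lets us absorb the last term after shrinking $\tau$, again depending only on the mass. This gives $\norm{\nabla u(t_1+\tau)}_2\le 2C_0\norm{\nabla u(t_1)}_2$. Iterating, the gradient grows at most exponentially, and the same argument applies to $J(t)u$. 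Hence $\norm{u(t)}_\Sigma$ stays finite on every bounded interval, and the solution extends to $[0,\infty)$.

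The delicate point is this mass-only choice of $\tau$. If the power of $T$ in the local theory were zero, as in the critical case, the argument would break, which is why the hypothesis $p<1+4/d$ is needed.
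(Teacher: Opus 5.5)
Your proposal is correct and follows essentially the paper's route: a Strichartz contraction with $r=p+1$, the mass identity from multiplying by $\overline{u}$, and an $L^2$-subcritical globalization in which $\nabla u$ and $J(t)u$ are propagated by Strichartz estimates on time steps fixed by the mass alone (via gauge invariance and $J(t)=\mathcal{M}(t)it\nabla\mathcal{M}(-t)$), which is the paper's ``persistence of regularity'' step written out explicitly.

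Two small points need tightening.
\begin{itemize}
\item For $1<p<2$ the difference estimate is not Lipschitz at the level of $\nabla u$ and $J(t)u$, because $F'$ is only $(p-1)$-H\"older. The contraction must therefore use the weaker metric $\norm{u-v}_{L^\infty L^2\cap L^qL^r}$ on a ball of your $X_T$; this is why the paper contracts at the $L^2$ level and upgrades regularity afterwards.
\item Uniqueness should be stated in $C(I;H^1)$, which lies in $L^\infty(I;L^{p+1})$ so your difference estimate applies. It should not be claimed in all of $C(I;L^2)$, where for $p>2$ the Duhamel term need not even make sense.
\end{itemize}
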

\begin{proof}[Proof of Lemma \ref{lem:22}]
The existence part is standard, but we sketch the proof for reader's convenience.
Let us introduce the complete metric space
\begin{align*}
      X_{T} \coloneqq{}& \left\{ u \in C([0, T] ; L^{2}) \cap L^{q_{0}}(0, T; L^{r_{0}})
\mid \norm{ u}_{X_{T}} \le 2 C_{0} \norm{u_{0}}_{2} \right\}, \\
      \norm{ u}_{X_{T}} \coloneqq{}& \norm{ u}_{L^{ \infty}(0, T; L^{2})} + \norm{ u}_{L^{q_{0}}(0, T; L^{r_{0}})},
      \quad d( u, v) \coloneqq \norm{u-v}_{X_{T}},
\end{align*}
where $r_{0} = p+1$, $2/q_{0} = d/2 - d/r_{0}$ and $C_{0}>0$ is the certain constant arising from the Strichartz estimate.
In what follows, we simply write $L^{q}(0, T; L^{r})$ as $L^{q}_{T}L^{r}$.
The standard well-posedness theory implies that there exist $T>0$ and a unique solution $u \in X_{T}$
to \eqref{nls} with $u \in L^{q}_{T} L^{r}$ for any admissible pair $(q,r)$ (for details, see \cite{YT87}).
Thanks to $\Im \lambda \le 0$,
multiplying \eqref{nls} by $\overline{ u}$, taking the imaginary part of both sides, and integrating over $\R^{d}$, we obtain
\begin{align*}
      \frac{1}{2} \frac{d}{dt} \norm{u(t)}_{2}^{2} =  \Im \lambda \norm{u(t)}_{p+1}^{p+1}
      \le 0,
\end{align*}
which implies $\norm{u(t)}_{2} \le \norm{u_{0}}_{2}$ for any $t \in [0, T].$
Then the $L^{2}$-solution $u$ can be extended globally in time.
Further, since $p<1+4/d$ and $u_{0} \in \Sigma$, using $J(t) = \mathcal{M}(t) it \nabla \mathcal{M}(-t)$,
we obtain $\nabla u$, $J(\cdot) u \in C([0, \infty) ; L^{2}) \cap L^{q}_{loc}((0, \infty); L^{r})$
from the persistence of regularity of solutions via a standard bootstrap argument involving Strichartz estimate.
\end{proof}

\vskip2mm
To evaluate the time decay rate, we introduce a general virial-type estimate for the weighted norm $\norm{x u(t)}_2$, provided that the gradient norm $\norm{\nabla u(t)}_2$ is uniformly bounded.

\begin{lemma}[Virial-type estimate] \label{lem:uni}
Assume $\Re \lambda < 0$ and $\Im \lambda < 0$. Let $u$ be the global solution given by Lemma \ref{lem:22}. Suppose that there exists a constant $M > 0$ such that
\[
      \sup_{t \ge 0} \norm{\nabla u(t)}_{2} \le M.
\]
Then the solution satisfies $u \in C([0, \infty) ; \mathcal{F}H^{1})$.
Moreover, the estimate
\[
      \norm{x u(t)}_{2} \lesssim 1+ t
\]
holds for any $t \ge 0$.
\end{lemma}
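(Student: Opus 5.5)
The plan is to compute the time derivative of $\norm{xu(t)}_2^2$ directly from the equation and bound it using only the assumed gradient bound and the $L^2$-dissipation \eqref{uni:1}. The regularity claim $u\in C([0,\infty);\mathcal{F}H^1)$ is already essentially contained in Lemma \ref{lem:22}: there $J(t)u\in C([0,\infty);L^2)$ and $\nabla u\in C([0,\infty);L^2)$, so $xu=J(t)u-it\nabla u\in C([0,\infty);L^2)$. The only point is that this holds on every finite interval, which the global persistence argument provides.

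For the estimate, work formally first. Multiply \eqref{nls} by $|x|^2\overline{u}$, take the imaginary part and integrate. The nonlinear term contributes
\[
\Im\lambda\int|x|^2|u|^{p+1}\,dx\le 0,
\]
because $\Im\lambda<0$; here the sign of $\Re\lambda$ plays no role. The Laplacian term, after one integration by parts, gives
\[
\Im\int x\cdot\nabla u\,\overline{u}\,dx.
\]
This yields
\[
\frac{d}{dt}\norm{xu(t)}_2^2=2\Im\int x\overline{u}\cdot\nabla u\,dx+2\Im\lambda\norm{xu}_{p+1}^{p+1}\le 2\norm{xu(t)}_2\norm{\nabla u(t)}_2\le 2M\norm{xu(t)}_2 .
\]
Dividing by $\norm{xu(t)}_2$ (or, more cleanly, applying the estimate to $(\varepsilon+\norm{xu}_2^2)^{1/2}$ and letting $\varepsilon\to0$) gives
\[
\frac{d}{dt}\norm{xu(t)}_2\le M,
\]
hence $\norm{xu(t)}_2\le\norm{xu_0}_2+Mt\lesssim 1+t$.

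The main obstacle is justifying the computation for $\Sigma$-solutions, since $|x|^2\overline u$ need not be integrable against $\Delta u$. I would use the standard regularization: replace $|x|^2$ by $|x|^2e^{-\varepsilon|x|^2}$ (or $\langle \varepsilon x\rangle^{-2}|x|^2$). Then the identity holds, for instance via smooth approximating data and continuous dependence in $\Sigma$, or directly from the Duhamel formula, with $\nabla(|x|^2e^{-\varepsilon|x|^2})$ bounded by $C|x|e^{-\varepsilon|x|^2/2}$ uniformly in $\varepsilon$. The nonlinear contribution keeps its favorable sign, and the linear term is bounded by $C\norm{\sqrt{w_\varepsilon}\,u}_2\norm{\nabla u}_2$ with $w_\varepsilon=|x|^2e^{-\varepsilon|x|^2}$. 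This gives the integrated bound
\[
\norm{\sqrt{w_\varepsilon}u(t)}_2\le\norm{xu_0}_2+CMt
\]
uniformly in $\varepsilon$, and Fatou's lemma then yields the claim as $\varepsilon\to0$.
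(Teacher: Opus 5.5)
Your proposal is correct and follows the paper's route. Continuity in $\mathcal{F}H^1$ comes from $xu=J(t)u-it\nabla u$ together with Lemma \ref{lem:22}, and the $|x|^2\overline{u}$ multiplier with Cauchy--Schwarz gives $\frac{d}{dt}\norm{xu(t)}_2\le M$. Your truncated weight $w_\varepsilon$ and the Fatou limit supply a rigorous justification that the paper leaves implicit. One harmless slip: the nonlinear term in your displayed identity should read $2\Im\lambda\int|x|^2|u|^{p+1}\,dx$, not $2\Im\lambda\norm{xu}_{p+1}^{p+1}$. This does not affect the argument, since the term is nonpositive and is discarded.
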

\begin{proof}
Noting $J(t) = x + it \nabla$, it follows that
\begin{align*}
      \norm{x (u(t) - u(t_{0}) )}_{2}
      \le{}& \norm{J(t) u(t) - J(t_{0}) u(t_{0})}_{2} + |t-t_{0}| \norm{\nabla u(t_{0})}_{2} \\
      &+ |t| \norm{\nabla (u(t) - u(t_{0}))}_{2} \\
      \rightarrow{}& 0
\end{align*}
as $t \rightarrow t_{0}$, which implies $u \in C([0, \infty) ; \mathcal{F}H^{1})$.
Next, we consider the case $\Re \lambda < 0$ since the other case is handled similarly.
Multiplying \eqref{nls} by $|x|^{2} \overline{u}$, taking the imaginary part of both sides, and integrating over $\R^{d}$, we have
\begin{align}
      \frac{1}{2} \frac{d}{dt} \norm{ x u(t)}_{2}^{2} \le \Im \int_{\R^{d}} \overline{u} x \cdot \nabla u\, dx.
      \label{eq:2}
\end{align}
Using \eqref{eq:2} and the assumption $\norm{\nabla u(t)}_{2} \le M$, one deduces from H\"older's inequality that
\begin{align*}
      \frac{d}{dt} \norm{x u(t)}_{2} \le \norm{ \nabla u(t)}_{2}
      \le M,
\end{align*}
which implies the desired estimate. Thus the proof is completed.
\end{proof}


The following weighted interpolation inequality can be regarded as a dual version of Gagliardo-Nirenberg inequality.
The first author used it in \cite{K20a} to obtain an $L^2$-decay estimate for \eqref{nls}, and it will play a crucial role to prove our main result:
\begin{lemma}[Weighted interpolation inequality] \label{lem:in1}
Fix $m \in \N$.
Let $q \in [1, 2)$ if $d =1$ and $q \in (2d/(d+2m), 2)$ if $d \ge 2$.
Set $\alpha = 1/q - 1/2$.
Then it holds that
\[
      \norm{ f}_{q} \lesssim \norm{ f}_{2}^{1- \frac{d \alpha}{m}} \norm{|x|^{m} f}_{2}^{ \frac{d \alpha}{m}}.
\]
\end{lemma}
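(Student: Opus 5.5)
This is a weighted interpolation inequality, and I would prove it by splitting space at a radius $R>0$ and then optimizing over $R$. Write
\[
      \norm{f}_q \le \norm{f}_{L^q(|x|\le R)} + \norm{f}_{L^q(|x|>R)}.
\]

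For the inner region I apply H\"older with exponents $2/q$ and $(1-q/2)^{-1}$. The exponent is chosen so that $1/q = 1/2 + \alpha$. This gives
\[
      \norm{f}_{L^q(|x|\le R)} \le |B_R|^{\alpha}\norm{f}_2 \lesssim R^{d\alpha}\norm{f}_2 .
\]

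For the outer region I write $f = |x|^{-m}\cdot |x|^m f$ and apply H\"older again:
\[
      \norm{f}_{L^q(|x|>R)} \le \norm{|x|^{-m}}_{L^{1/\alpha}(|x|>R)}\norm{|x|^m f}_2 .
\]
The weight is $L^{1/\alpha}$-integrable at infinity exactly when $m/\alpha > d$, that is, $\alpha < m/d$. This is equivalent to $q > 2d/(d+2m)$.

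When $d=1$ we have $\alpha \le 1/2 < m$, so the condition holds automatically for $q\in[1,2)$. This explains the different hypothesis in that case; the endpoint $q=1$ with $\alpha=1/2$ is allowed.

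Under this condition the weight norm is $\simeq R^{-m+d\alpha}$. Combining the two regions,
\[
      \norm{f}_q \lesssim R^{d\alpha}\norm{f}_2 + R^{d\alpha - m}\norm{|x|^m f}_2 .
\]

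Next I optimize in $R$. If $f\neq 0$ and $\norm{|x|^m f}_2<\infty$, I choose $R = (\norm{|x|^m f}_2/\norm{f}_2)^{1/m}$, which balances the two terms. Both terms then equal $\norm{f}_2^{1-d\alpha/m}\norm{|x|^mf}_2^{d\alpha/m}$, which is the claimed bound. The degenerate cases, where $f=0$ or the weighted norm is infinite, are trivial.

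The main obstacle is only the integrability check for the outer weight, i.e. verifying that $m/\alpha>d$ is exactly the stated range of $q$. Everything else is routine.
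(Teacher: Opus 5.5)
Your proof is correct and is essentially the paper's argument. The paper rescales and applies H\"older once on all of $\R^d$ against the weight $(1+|y|^m)^{-1}\in L^{2q/(2-q)}=L^{1/\alpha}$, which is a smoothed version of your splitting into $|x|\le R$ and $|x|>R$; it needs the same condition $m/\alpha>d$ and makes the same choice $R=\lambda=(\norm{|x|^m f}_2/\norm{f}_2)^{1/m}$.

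One small remark: for $d\ge2$ with $d<2m$ the stated range contains $q<1$. There your first step $\norm{f}_q\le\norm{f}_{L^q(|x|\le R)}+\norm{f}_{L^q(|x|>R)}$ holds only up to the factor $2^{1/q-1}$, which the paper's single H\"older step avoids; this is harmless for a $\lesssim$ estimate.
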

\begin{proof}
The case $f \equiv 0$ is trivial; otherwise, by scaling and H\"older's inequality, we have
\begin{align*}
      \norm{ f}_{q}
      \le{}& \lambda^{\frac{d}{q}} \norm{(1+ |y|^{m})^{-1}}_{2q/(2-q)} \norm{ (1+|y|^{m}) f( \lambda \cdot)}_{2} \\
      \lesssim{}& \lambda^{d \alpha}  \left( \norm{ f}_{2} + \lambda^{-m} \norm{|x|^{m} f}_{2}  \right),
\end{align*}
from which the assertion follows, taking $\lambda = (\norm{|x|^{m} f}_{2}/ \norm{ f}_{2})^{1/m}$.
The proof is completed.
\end{proof}

\section{Uniform Gradient Bound and Proof of the Main Result} \label{sec:3}

We prove the uniform gradient bound by using an augmented energy.
Set
\begin{align*}
      E(u) \coloneqq \frac{1}{2} \norm{\nabla u}_{2}^{2}
      + \frac{\Re\lambda}{p+1}\norm{u}_{p+1}^{p+1},
\end{align*}
and, for $\gamma>0$,
\begin{align*}
      E_{\mathrm{aug}}(u) \coloneqq E(u)+\gamma\norm{u}_2^2.
\end{align*}
The added term produces the dissipative contribution
$-2\gamma|\Im\lambda|\norm{u}_{p+1}^{p+1}$ in the time derivative.

\begin{proposition}[Uniform gradient bound]\label{prop-eneB}
Let $1< p < 1+4/d$, $\Re \lambda < 0$, and $\Im \lambda < 0$. Let $u \in C([0,\infty); \Sigma)$ be the global solution to \eqref{nls} given by Lemma \ref{lem:22}. Then there exists a constant $\gamma = \gamma(\norm{u_0}_{H^1})>0$ such that, for any $t \ge 0$,
\begin{align}\label{ene-B}
      E_{\mathrm{aug}}(u(t)) \le E_{\mathrm{aug}}(u_0).
\end{align}
Consequently, there exists a constant $M(u_{0}) > 0$ independent of $t$ such that
\begin{align}
      \norm{ \nabla u(t)}_{2} \le M(u_{0}) \label{uni:3a}
\end{align}
for any $t \ge 0$.
\end{proposition}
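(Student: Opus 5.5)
The plan is to compute $\frac{d}{dt}E_{\mathrm{aug}}(u(t))$ along the flow, show that it is $\le 0$ for a suitable $\gamma$, and then deduce \eqref{uni:3a} by Gagliardo--Nirenberg. The identity and every integration by parts below would first be justified for smooth, decaying data and then passed to $u_0\in\Sigma$ using the continuous dependence from Lemma \ref{lem:22}. Write $N=|u|^{p-1}u$.

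\textbf{Computing the derivative.} Using $u_t=i(\frac12\Delta u-\lambda N)$, the identity $-\Delta\bar u=-2(i\bar u_t+\bar\lambda \bar N)$, and the real part of $\int|u_t|^2$ vanishing after multiplication by $i$, I expect
\[
\frac{d}{dt}E(u)=\Re\int(-\Re\lambda+2i\Im\lambda)\,\bar N\,u_t\,dx .
\]
Inserting $u_t$ again, this splits into four contributions:
\begin{itemize}
\item[(i)] a coercive term $\Im\lambda\,Q(u)$, where
\[
Q(u)=\Re\int\nabla(|u|^{p-1}\bar u)\cdot\nabla u\,dx=\int|u|^{p-3}\bigl(p(\Re\bar u\nabla u)^2+(\Im\bar u\nabla u)^2\bigr)dx\ge0 ;
\]
\item[(ii)] a term $-\Re\lambda\,\Im\lambda\,\norm{u}_{2p}^{2p}\le0$, which has a good sign because $\Re\lambda\Im\lambda>0$;
\item[(iii)] the sign-indefinite cross term $\frac{(p-1)|\Re\lambda|}{2}\int|u|^{p-3}(\Re\bar u\nabla u)\cdot(\Im\bar u\nabla u)\,dx$, up to sign;
\item[(iv)] a term of the form $c\,\Im\lambda\,|\lambda|^2$ times $\norm{u}_{2p}^{2p}$, possibly with a harmless sign.
\end{itemize}
The added mass term contributes $2\gamma\Im\lambda\norm{u}_{p+1}^{p+1}=-2\gamma|\Im\lambda|\norm{u}_{p+1}^{p+1}$.

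\textbf{Absorbing the cross term (main obstacle).} Pointwise, (iii) is a quadratic form in $R=\Re\bar u\nabla u$ and $I=\Im\bar u\nabla u$. The coercive part (i) controls $|\Im\lambda|(pR^2+I^2)$, and (i) absorbs (iii) exactly when the strong dissipative condition \eqref{sdc} holds. Without \eqref{sdc}, a pure quadratic-form argument cannot close.

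The extra dissipation I have available consists of $-2\gamma|\Im\lambda|\norm{u}_{p+1}^{p+1}$ and the $\norm{u}_{2p}^{2p}$ term (ii). My first attempt would be a Young splitting of the cross term on the sets where $|\nabla u|$ is large or small relative to $|u|$:
\begin{itemize}
\item where $|\nabla u|\lesssim K|u|$, the integrand of (iii) is bounded by $K^2|u|^{p+1}$, so it is absorbed by choosing $\gamma\gtrsim K^2|\Re\lambda|/|\Im\lambda|$;
\item on the complementary set, one must exploit that $I$ and $R$ are not both comparable to $|u||\nabla u|$ with the bad relative phase, in combination with the $\norm{u}_{2p}^{2p}$ dissipation.
\end{itemize}
The threshold $K$, and hence $\gamma$, would be fixed in terms of $\norm{u_0}_{H^1}$, which is consistent with the dependence $\gamma=\gamma(\norm{u_0}_{H^1})$ in the statement. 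I flag the large-gradient region as the genuinely delicate part of the argument. If it cannot be handled pointwise, the fallback is to estimate the cross term in integrated form via Gagliardo--Nirenberg against $\norm{u}_{p+1}^{p+1}$ and $\norm{u}_{2p}^{2p}$, using $p<1+4/d$.

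\textbf{Deriving the gradient bound.} Once \eqref{ene-B} is established, the bound \eqref{uni:3a} is routine:
\[
\frac12\norm{\nabla u(t)}_2^2\le E_{\mathrm{aug}}(u_0)+\frac{|\Re\lambda|}{p+1}\norm{u(t)}_{p+1}^{p+1}.
\]
By Gagliardo--Nirenberg and \eqref{uni:1},
\[
\norm{u(t)}_{p+1}^{p+1}\lesssim\norm{u_0}_2^{\,p+1-\frac{d(p-1)}{2}}\,\norm{\nabla u(t)}_2^{\frac{d(p-1)}{2}}.
\]
Since $p<1+4/d$, the exponent satisfies $\frac{d(p-1)}2<2$, so Young's inequality gives
\[
\norm{\nabla u(t)}_2^2\le C\bigl(E_{\mathrm{aug}}(u_0),\norm{u_0}_2\bigr)=:M(u_0)^2,
\]
uniformly in $t\ge0$.
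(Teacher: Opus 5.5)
\textbf{There is a genuine gap: the inequality $\frac{d}{dt}E_{\mathrm{aug}}\le 0$ is never established, and neither route you sketch can establish it.}

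Your term (iii) is quadratic in $\nabla u$ with weight $|u|^{p-1}$. This is the same homogeneity as the coercive term (i), so it cannot be bounded by $\norm{u}_{p+1}^{p+1}$ or $\norm{u}_{2p}^{2p}$: Gagliardo--Nirenberg controls Lebesgue norms by gradients, not conversely. Nor is there a hidden phase constraint between $R$ and $I$. Take $u=ae^{i\phi}$ with $a\nabla\phi=\pm\sqrt p\,\nabla a$, e.g.\ $u_0=\varepsilon e^{-(1\pm i\sqrt p)N^2|x|^2/2}$. Then $I=\pm\sqrt p\,R$ pointwise, so $p|R|^2+|I|^2=2\sqrt p\,|R\cdot I|$. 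Suppose $(p-1)|\Re\lambda|>4\sqrt p\,|\Im\lambda|$, choose the sign so that (iii) is positive, and fix $\norm{u_0}_{H^1}$ while $N\to\infty$. Then $\frac{d}{dt}E_{\mathrm{aug}}(u(t))|_{t=0}>0$ for any fixed $\gamma$.

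Your sign in (ii) is also wrong. All $|N|^2$-contributions combine into $+\Re\lambda\,\Im\lambda\,\norm{u}_{2p}^{2p}$, which is \emph{positive} when $\Re\lambda,\Im\lambda<0$, and there is no separate term (iv). This is the genuinely bad term of the attractive case.

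Term (iii) comes from the normalization of the potential part. For $i\partial_tu+\frac12\Delta u=\lambda|u|^{p-1}u$, the Hamiltonian-type energy is $\frac12\norm{\nabla u}_2^2+\frac{2\Re\lambda}{p+1}\norm{u}_{p+1}^{p+1}$. The paper's proof starts from $\frac{d}{dt}E=2\Im\lambda\int|u|^{p-1}\Im(u\overline{\partial_t u})\,dx$, which is exact for that normalization. It gives $\frac{d}{dt}E=\Im\lambda\,Q(u)+2\Re\lambda\,\Im\lambda\,\norm{u}_{2p}^{2p}$, with no cross term. Changing the coefficient is harmless for the final gradient bound.

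\textbf{The missing mechanism is how to beat the positive $L^{2p}$ term.}

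First, the paper uses $Q(u)\ge\int|u|^{p-1}|\nabla u|^2\ge\frac{4}{(p+1)^2}\norm{\nabla f}_2^2$ with $f=|u|^{(p+1)/2}$, and writes $\norm{u}_{2p}^{2p}=\norm{f}_{4p/(p+1)}^{4p/(p+1)}$. Gagliardo--Nirenberg for $f$ plus Young (possible since $p<1+4/d$) absorbs this into the dissipation. What remains is $C_E|\Im\lambda|(\norm{u}_{p+1}^{p+1})^{p_1}$ with $p_1>1$.

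Second, the mass term $-2\gamma|\Im\lambda|\norm{u}_{p+1}^{p+1}$ dominates this superlinear remainder only while $\norm{u(t)}_{p+1}^{p+1}$ stays below a threshold. So the monotonicity is not an inequality valid for every $u\in H^1$ (replace $u$ by $cu$ with $c\gg1$). The paper obtains it by a continuity argument for small $H^1$ data: the monotone quantity itself bounds $\norm{\nabla u(t)}_2$, and hence keeps $\norm{u(t)}_{p+1}^{p+1}$ below the threshold.

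Third, the result is transferred to arbitrary data by the $L^2$-subcritical scaling $u_\mu=\mu^{2/(p-1)}u(\mu^2t,\mu x)$. This converts the coefficient $1$ in front of $\norm{u}_2^2$ into $\gamma=\mu^{-2}$, which is where $\gamma=\gamma(\norm{u_0}_{H^1})$ comes from. A pointwise threshold on $|\nabla u|/|u|$, as in your splitting, is not controlled by any norm of $u_0$.

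Your last step, deriving \eqref{uni:3a} from \eqref{ene-B}, is fine and matches the paper.
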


\begin{proof}[Proof of Proposition \ref{prop-eneB}]
Let $\lambda_1 = \Re \lambda$ and $\lambda_2 = \Im \lambda$.
Multiplying \eqref{nls} by $\overline{ \partial_{t} u}$, taking the real part of both sides,
and integrating over $\mathbb{R}^{d}$, we have
\begin{align*}
      \frac{d}{dt} E(u(t)) = 2 \lambda_{2} \int_{\mathbb{R}^{d}} |u(t)|^{p+1} \Im (u(t) \overline{\partial_{t} u(t)}) \, dx.
\end{align*}
Using \eqref{nls} again, one estimates
\begin{align}
\begin{aligned}
      \frac{d}{dt} E(u(t)) ={}& \frac{\lambda_{2}(p+1)}{2} \int_{\R^{d}} |u|^{p-1} | \nabla u|^{2}\, dx \\
&+ \frac{\lambda_{2}(p-1)}{2} \int_{\mathbb{R}^{d}} |u|^{p-3} \Re \big(u^{2} \overline{ \nabla u}^{2}\big)\, dx \\
      &{}+ 2\lambda_{1}\lambda_{2} \|u\|_{2p}^{2p} \\
      \le{}& \lambda_{2} \int_{\mathbb{R}^{d}} |u|^{p-1} |\nabla u|^{2}\, dx + 2\lambda_{1}\lambda_{2} \|u\|_{2p}^{2p}.
\end{aligned}
      \label{en:ineq3}
\end{align}
Recall that we assume $\lambda_1 < 0$.
Set $f = |u|^{\frac{p+1}{2}}$. Note that $\big|\nabla |u|\big| \le |\nabla u|$, which implies $\int |u|^{p-1} |\nabla u|^2 dx \ge \frac{4}{(p+1)^2} \norm{\nabla f}_2^2$.
By Gagliardo-Nirenberg inequality:
\[
      \|f\|_{\frac{4p}{p+1}}
      \le{}C\|f\|_{2}^{1- \theta}\|\nabla f\|_{2}^{\theta},
      \quad \theta = \frac{d(p-1)}{4p} \in (0,1),
\]
we see from \eqref{en:ineq3} that
\begin{align*} 
      \frac{d}{dt} E(u(t)) \le{}& - \frac{4|\lambda_{2}|}{(p+1)^2}  \|\nabla f\|_{2}^{2}
      + 2\lambda_{1}\lambda_{2}  \|u\|_{2p}^{2p} \\
      \le{}& - \frac{4|\lambda_{2}|}{(p+1)^2} \|\nabla f\|_{2}^{2}
      + 2\lambda_{1}\lambda_{2} C^{\frac{4p}{p+1}}
      \|f\|_{2}^{\frac{4p-d(p-1)}{p+1}}\|\nabla f\|_{2}^{\frac{d(p-1)}{p+1}}.
\end{align*}
Since $d(p-1)/(p+1) < 2$ under the assumption $p < 1+4/d$, applying Young's inequality $ab \le C_\varepsilon a^{\frac{2(p+1)}{2(p+1)-d(p-1)}}+\varepsilon b^{\frac{2(p+1)}{d(p-1)}}$ with a sufficiently small $\varepsilon>0$ to absorb the $\|\nabla f\|_{2}^{2}$ term, we obtain
\begin{align}\label{d-dt-E}
\begin{aligned}
      \frac{d}{dt} E(u(t))
      \le{}&C_E |\lambda_2|\|f\|_{2}^{\frac{4p-d(p-1)}{p+1}\cdot\frac{2(p+1)}{2(p+1)-d(p-1)}}\\
      ={}&C_E |\lambda_2|\big(\|u\|_{p+1}^{p+1}\big)^{\frac{4p-d(p-1)}{2(p+1)-d(p-1)}},
\end{aligned}
\end{align}
where $C_E > 0$ is a constant arising from the previous estimate,
and $p_1 \coloneqq \frac{4p-d(p-1)}{2(p+1)-d(p-1)}>1$ since $p>1$.
Combining \eqref{d-dt-E} and the $L^2$-dissipative identity
$\frac{d}{dt}\|u(t)\|_{2}^2=2\lambda_2\|u(t)\|_{p+1}^{p+1}$,
we have
\begin{align}\label{Ene-L2}
\frac{d}{dt} \big(E(u(t)) +\|u(t)\|_2^2\,\big) \le g(\|u(t)\|_{p+1}^{p+1}),
\end{align}
where
\begin{align*}
      g(X)={}&C_E |\lambda_2| X^{p_1} -2|\lambda_2|X.
\end{align*}
Since $p_1>1$, $g(X)$ has a positive root $\alpha$ such that $g(\alpha)=0$, and
it follows that $g(X) \le 0$ for all $X \in [0, \alpha]$.
We first claim that there exists a sufficiently small $\varepsilon_0>0$ such that
if $\|u_0\|_{H^1}^{p+1} \le \varepsilon_0$, then
\begin{align}\label{cl:2p}
\sup_{t \ge 0}\|u(t)\|_{p+1}^{p+1} < \alpha.
\end{align}
Note that the embedding $H^{1} \hookrightarrow L^{p+1}$ and $u \in C([0, \infty); H^{1})$ imply
\begin{align*}
      u \in C([0, \infty); L^{p+1}).
\end{align*}
To show \eqref{cl:2p}, set
\begin{align*}
T_\alpha(u_0)=\inf{\{T>0 \mid \|u(T)\|_{p+1}^{p+1}=\alpha \}}
\end{align*}
for solutions $u$ to \eqref{nls}.
For sufficiently small initial data $\|u_0\|_{H^1}$, we find $T_\alpha(u_0)>0$
from the time continuity of $\|u(t)\|_{H^1}$.
We shall prove \eqref{cl:2p} by contradiction.
Assume that $T_\alpha(u_0)<\infty$ for certain initial data with
$\|u_0\|_{H^1}^{p+1} \le \varepsilon_0$, where $\varepsilon_0 > 0$ will be chosen later to be sufficiently small.
By the definition of $T_\alpha(u_0)$, we have $\|u(t)\|_{p+1}^{p+1} < \alpha$ for all $t \in [0,T_\alpha(u_0))$.
We then deduce from \eqref{Ene-L2} that
\begin{align}\label{small-EL2}
E(u(t))+\|u(t)\|_2^2 \le E(u_0)+\|u_0\|_2^2
\end{align}
for any $t \in [0,T_\alpha(u_0)]$. Using the inequalities
\[
  \|u\|_{p+1}
\le C \|u\|_{L^2}^{1-\frac{d(p-1)}{2(p+1)}}\|\nabla u\|_{L^2}^{\frac{d(p-1)}{2(p+1)}}
\]
and  $XY \le X^{\frac{q}{q-1}}+Y^q$ with $q=4/(d(p-1))$ (note that $p < 1+4/d$ ensures $q>1$),
we have
\begin{align*}
      \|\nabla u(t)\|_{2}^2
      \le CE(u_0) +C\|u_0\|_{2}^{\frac{2((2-d)p+2+d)}{4-d(p-1)}} + C\|u_0\|_2^2
\end{align*}
for any $t \in [0,T_\alpha(u_0)]$, where $C>0$ does not depend on $t$ and $u_0$.
From the above energy estimate and Gagliardo-Nirenberg inequality,
we derive that
$\|u(T_\alpha(u_0))\|_{p+1}^{p+1} \le C_{1} \varepsilon_0$, where the constant $C_{1}>0$ does not depend on $t$ and $u_0$.
Taking $\varepsilon_{0} >0$ such that $C_{1} \varepsilon_{0} < \alpha$, this contradicts the definition of $T_\alpha(u_0)$ because it forces $\|u(T_\alpha(u_0))\|_{p+1}^{p+1} < \alpha$. Thus $T_\alpha(u_0)=\infty$ holds.
This proves \eqref{cl:2p}.
Thanks to \eqref{Ene-L2} and \eqref{cl:2p}, we obtain \eqref{small-EL2} for any $t \ge 0$, and hence \eqref{ene-B} and \eqref{uni:3a} are valid under the smallness assumption $\|u_0\|_{H^1} \ll 1$.

To obtain \eqref{ene-B} and \eqref{uni:3a} without any size restriction,
we apply the scaling transformation $u_\mu(t,x)=\mu^{2/(p-1)}u(\mu^2t,\mu x)$ to the solutions.
Then we see that
\[
      \|u_{\mu}(0)\|_{H^1}
      \lesssim
      \mu^{\frac{2}{p-1}-\frac{d}{2}}\|u_0\|_{2}
      +\mu^{\frac{2}{p-1}+1-\frac{d}{2}}\|\nabla u_0\|_{2}
      \to 0
\]
as $\mu \to 0$ since $p < 1+4/d$.
Therefore, by choosing $\mu = \mu(\norm{u_0}_{H^1}) > 0$ small enough, we can ensure $\norm{u_{\mu}(0)}_{H^1} \ll 1$. Indeed, let $C_H>0$ be a constant such that, for $0<\mu\le1$,
\[
      \|u_{\mu}(0)\|_{H^1}
      \le C_H\mu^{\frac{2}{p-1}-\frac{d}{2}}\|u_0\|_{H^1}.
\]
Setting
\[
      \mu = \min \left( \left( \frac{\varepsilon_0^{\frac{1}{p+1}}}{C_H\norm{u_0}_{H^1}} \right)^{\left(\frac{2}{p-1} - \frac{d}{2}\right)^{-1}}, 1 \right),
\]
one obtains $\norm{u_\mu(0)}_{H^1} \le \varepsilon_0^{\frac{1}{p+1}}$.
Applying the previous small data result to $u_\mu$, we obtain
\begin{align*}
E(u_\mu(t))+\norm{u_\mu(t)}_2^2 \le E(u_{\mu}(0))+\norm{u_{\mu}(0)}_2^2
\end{align*}
for any $t \ge 0$, even if $\norm{u_0}_{H^1} \gg 1$.
Scaling back to the original solution $u(t,x)$, this inequality is equivalent to
\begin{align*}
      \mu^{\frac{4}{p-1}-d+2} E(u(\mu^2 t)) + \mu^{\frac{4}{p-1}-d} \|u(\mu^2 t)\|_2^2
      \le \mu^{\frac{4}{p-1}-d+2} E(u_0) + \mu^{\frac{4}{p-1}-d} \|u_0\|_2^2.
\end{align*}
Dividing both sides by $\mu^{\frac{4}{p-1}-d+2}$ and replacing $\mu^2 t$ with $t \ge 0$, we deduce
\begin{align*}
      E(u(t)) + \mu^{-2} \|u(t)\|_2^2 \le E(u_0) + \mu^{-2} \|u_0\|_2^2.
\end{align*}
This directly yields \eqref{ene-B} by setting $\gamma \coloneqq \mu^{-2}$, which explicitly depends on the initial size $\|u_0\|_{H^1}$.
The uniform gradient bound \eqref{uni:3a} then follows from \eqref{ene-B} and Gagliardo-Nirenberg inequality.
Indeed, since $\Re\lambda<0$, we have
\[
      \frac12 \norm{\nabla u(t)}_2^2
      = E(u(t)) + \frac{|\Re\lambda|}{p+1}\norm{u(t)}_{p+1}^{p+1}.
\]
By Gagliardo-Nirenberg inequality and Young's inequality,
\[
      \norm{u(t)}_{p+1}^{p+1}
      \le \varepsilon \norm{\nabla u(t)}_2^2
      + C_\varepsilon
      \norm{u(t)}_2^{\frac{2((2-d)p+2+d)}{4-d(p-1)}}
\]
for sufficiently small $\varepsilon>0$.
Using \eqref{ene-B} and the monotonicity of $\norm{u(t)}_2$, we obtain
\[
      \norm{\nabla u(t)}_2^2
      \le
      C\left(
      |E_{\mathrm{aug}}(u_0)|
      + \norm{u_0}_2^{2}
      + \norm{u_0}_2^{\frac{2((2-d)p+2+d)}{4-d(p-1)}}
      \right)
\]
for all $t\ge0$, where $C>0$ depends on $d,p,\lambda$ and $\gamma$.
This gives \eqref{uni:3a}.
\end{proof}

We now prove the main theorem by combining the uniform gradient bound with the virial-type and weighted interpolation inequalities.

\begin{proof}[Proof of Theorem \ref{thm:1}]
We follow the argument in \cite{K20a} and \cite{GKS24}.
Let $u \in C([0, \infty); \Sigma)$ be the global solution given by Lemma \ref{lem:22}.
By Proposition \ref{prop-eneB}, the gradient norm is uniformly bounded, which allows us to apply Lemma \ref{lem:uni}.
Multiplying \eqref{nls} by $\overline{ u}$, taking the imaginary part of both sides and integrating over $\R^{d}$,
we see from $\Im \lambda < 0$ and H\"older's inequality that
\begin{align*}
      \frac{1}{2} \frac{d}{dt} \norm{u(t)}_{2}^{2}
      = \Im \lambda \norm{ u(t)}_{p+1}^{p+1}
      \le -|\Im \lambda| \norm{u(t)}_{2}^{\beta} \norm{u(t)}_{q}^{p+1 - \beta},
\end{align*}
where $q =1$ if $d=1$ and $q = 2(d+1)/(d+2)$ if $d \ge 2$, and $\beta>p+1$ is defined by
\[
      \beta \alpha = \frac{p+1}{q} -1, \quad
      \alpha = \frac{1}{q} - \frac{1}{2}.
\]
Noting $p+1-\beta<0$ and $(p+1 - \beta)d \alpha = d(1-p)/2$,
Lemma \ref{lem:in1} with $m=1$ leads to
\begin{align*}
      \norm{u(t)}_{q}^{p+1 - \beta}
      &\gtrsim \norm{u(t)}_{2}^{(p+1 - \beta)(1- d \alpha)} \norm{x u(t)}_{2}^{(p+1 - \beta) d \alpha} \\
      &= \norm{u(t)}_{2}^{p+1 + \frac{d(p-1)}{2}- \beta} \norm{x u(t)}_{2}^{\frac{d(1-p)}{2}}.
\end{align*}
Combining the above with Lemma \ref{lem:uni}, we obtain
\begin{align*}
      \frac{1}{2} \frac{d}{dt} \norm{u(t)}_{2}^{2}
      \le{}& - C \norm{u(t)}_{2}^{p+1+ \frac{d(p-1)}{2}} \norm{x u(t)}_{2}^{\frac{d(1-p)}{2}} \notag\\
      \le{}& - C (1+t)^{-\frac{d(p-1)}{2}} \norm{u(t)}_{2}^{p+1+ \frac{d(p-1)}{2}},
\end{align*}
which yields
\begin{align*}
      \frac{d}{dt}  \left( \norm{u(t)}_{2}^{-\frac{(d+2)(p-1)}{2}}  \right) \ge C (1+t)^{-\frac{d(p-1)}{2}}
\end{align*}
for any $t \ge 0$.
Noting \eqref{uni:1} and $p \le 1+2/d$, integrating the above with respect to $t$, we establish the desired decay estimate.
The proof is completed.
\end{proof}

\subsection*{Acknowledgments}
N.K. was supported by JSPS KAKENHI Grant Numbers 23K03168 and 23K03183.
H.M. was supported by JSPS KAKENHI Grant Number 22K13941 and 26K00612.
T.S. was supported by JSPS KAKENHI Grant Numbers 22K13937 and 23KJ1765.


\end{document}